\numberwithin{equation}{section}
\newcommand{\eps}{\varepsilon}
\def\R{\mathbb R}
\def\C{\mathbb C}
\def\N{\mathbb N}
\def\arg{\operatorname{arg}}
\def\meas{\operatorname{meas}}
\newtheorem{lemma}{Lemma}[section]
\newtheorem{theorem}{Theorem}[section]
\theoremstyle{definition}
\newtheorem{definition}{Definition}[section]
\theoremstyle{remark}
\newtheorem*{rem}{Remark}
\newtheorem{remark}{Remark}
\begin{document}
\title[Wiman-Valiron disks]{The size of Wiman-Valiron disks}
\dedicatory{Dedicated to Professor C.-C.\ Yang on the occasion
of this 65th birthday}
\subjclass{30D10 (primary), 30D20, 30B10, 34M05 (secondary)}
\author{Walter Bergweiler}
\thanks{Supported by the G.I.F.,
the German--Israeli Foundation for Scientific Research and
Development, Grant G-809-234.6/2003,
the EU Research Training Network CODY,
and the ESF Research Networking Programme HCAA}
\address{Mathematisches Seminar,
Christian--Albrechts--Universit\"at zu Kiel,
Lude\-wig--Meyn--Str.~4,
D--24098 Kiel,
Germany}
\email{bergweiler@math.uni-kiel.de}
\begin{abstract}
Wiman-Valiron theory and results of Macintyre about ``flat regions''
describe the asymptotic behavior of entire functions
in certain disks around points of maximum modulus.
We estimate the size of these disks for Macintyre's theory
from above and below.
\end{abstract}
\maketitle
\section{Introduction}
Let
$f(z)=\sum_{n=0}^\infty a_nz^n$
be an entire function,
$M(r,f):=\max_{|z|=r}|f(z)|$ its {\em maximum modulus}
and
$\mu(r,f):=\max_{n\geq 0} |a_n|r^n$  its {\em maximum term}.
The largest $n$ for which $\mu(r,f)=|a_n|r^n$
is denoted by $\nu(r,f)$  and called the {\em central index}.
(Except for a discrete set of $r$-values there is only one
integer $n$ with $\mu(r,f)=|a_n|r^n$.)
We say that a set $F\subset [1,\infty)$
has {\em finite logarithmic measure} if $\int_F dt/t <\infty$.

The main result of Wiman-Valiron
theory says that there exists a set
$F$ of finite logarithmic measure
such that if $|z_r|=r\notin F$,  if $|f(z_r)|=M(r,f)$ and
if $z$ is sufficiently close to~$z_r$, then
\begin{equation}\label{wv}
f(z)\sim \left(\frac{z}{z_r}\right)^{\nu(r,f)}f(z_r)
\end{equation}
as $r\to\infty$.
Equivalently,
$$f(e^\tau z_r)\sim e^{\nu(r,f)\tau}f(z_r)$$
if $|\tau|$ is sufficiently small.
Wiman~\cite{Wiman1916} obtained~\eqref{wv} for
$$|z|=r \quad\text{and}\quad
\left|\arg z -\arg z_r\right|\leq \frac{1}{\nu(r,f)^{3/4+\delta}}$$
if $\delta>0$ while
Valiron~\cite[Theorem~29]{Valiron1923} proved~\eqref{wv} under
the conditions
$$\bigl||z|-r\bigr|\leq \frac{Kr}{\nu(r,f)}\quad\text{and}\quad
\left|\arg z -\arg z_r\right|\leq \frac{1}{\nu(r,f)^{15/16}},$$
for any
given constant~$K$.
Macintyre~\cite{Macintyre1938} noted that~\eqref{wv} holds for
$$|z-z_r|\leq \frac{r}{\nu(r,f)^{1/2+\eps}}$$
if $\eps>0$.
The sharpest estimates are due to Hayman~\cite{Hayman1974} whose results
imply that if
$$\psi(t)=t \cdot \log t \cdot \log \log t \cdot \ldots  \cdot\log^{m-1}t
\cdot \left(\log^m t\right)^{1+\eps},$$
where $\eps>0$, $m\in\N$ and
$\log^m$ denotes the $m$-th iterate of the logarithm,
then~\eqref{wv} holds
for
$$|z-z_r|\leq \frac{r}{\sqrt{\psi(\nu(r,f)) \log \psi(\nu(r,f))}}.$$

Results similar to those of Wiman-Valiron theory
were obtained by Macintyre~\cite{Macintyre1938} with $\nu(r,f)$
replaced by
$$a(r,f):=\frac{d \log M(r,f)}{d\log r}.$$
Recall here that $\log M(r,f)$ is convex in $\log r$. Since
convex functions have nondecreasing left and right derivatives
and since they are differentiable except for an at most countable set,
the derivative of $\log M(r,f)$ with respect to $\log r$
exists except possibly for a countable set of $r$-values.
(Actually,
by a result of Blumenthal (see~\cite[Section~II.3]{Valiron1923}),
the set of $r$-values where $\log M(r,f)$ is not differentiable
is discrete.) To be definite, we shall always denote
by $a(r,f)$ the right derivative of $\log M(r,f)$ with
respect to $\log r$. Then $a(r,f)$ is nondecreasing and
it can be shown that
$$
a(r,f)= \frac{z_r f'(z_r)}{f(z_r)}
$$
except for an at most countable set of $r$-values.
The result of
Macintyre~\cite[Theorem~3]{Macintyre1938} says that
$$
f(z)\sim \left(\frac{z}{z_r}\right)^{a(r,f)}f(z_r)
$$
for
\begin{equation}\label{Meps}
|z-z_r|\leq \frac{r}{(\log M(r,f))^{1/2+\eps}}
\end{equation}
as $r\to\infty$, $r\notin F$.

More recently, a result of this type was obtained in~\cite{BRS}.
There it is not required that $f$ is entire but only that $f$ is
as in the following definition.
\begin{definition} \label{defin1}
Let $D$ be an unbounded domain in $\C$ whose boundary consists
of piecewise smooth curves. Suppose that
the complement of $D$ is unbounded.
Let $f$ be a complex-valued function whose domain of definition
contains the closure $\overline{D}$ of~$D$. Then $D$ is called a
{\em direct tract} of $f$ if $f$ is holomorphic in $D$ and
continuous in $\overline{D}$ and if there exists $R>0$ such that
$|f(z)|=R$ for $z\in\partial D$ while
$|f(z)|>R$ for $z\in D$.
\end{definition}
We note that
every transcendental entire function has a direct tract.
Let $f,D,R$ be as in the above definition and put
$$
\label{defM}
M(r,f,D):=\max_{|z|=r, z\in D}|f(z)|.
$$
Then $\log M(r,f,D)$ is again convex in $\log r$.
Denoting by $a(r,f,D)$ the right derivative of $\log M(r,f,D)$ with
respect to $\log r$ we see as before
that $a(r,f,D)$ is nondecreasing and
$$
a(r,f,D)
= \frac{z_r f'(z_r)}{f(z_r)}
$$
except for an at most countable set
of $r$-values, with $z_r\in D$ such that
$|z_r|=r$ and $|f(z_r)|=M(r,f,D)$.
It follows from a  result of Fuchs~\cite{Fuchs1981} that
$$
\lim_{r\to\infty} \frac{\log M(r,f,D)}{\log r} =\infty
\quad \text{and} \quad
\lim_{r\to\infty} a(r,f,D)=\infty.
$$
The main result of~\cite{BRS} says that if $\tau>\frac12$, then
there exists a set $F$ of finite logarithmic
measure such that
\begin{equation}\label{wvlike}
f(z)\sim \left(\frac{z}{z_r}\right)^{a(r,f,D)}f(z_r)
\end{equation}
for
\begin{equation}\label{wvsize}
|z-z_r|<\frac{r}{a(r,f,D)^{\tau}}
\end{equation}
as $r\to\infty$, $r\notin F$.
In particular, the disk of radius $r/a(r,f,D)^{\tau}$ around
$z_r$ is contained in the direct tract~$D$.

We investigate the question how large the disk around $z_r$ in
which~\eqref{wvlike}
holds can be chosen. Our main result says that
if $\psi:[t_0,\infty)\to (0,\infty)$ satisfies certain regularity
conditions discussed below, then (\ref{wvlike}) holds
for $|z-z_r|<r/\sqrt{\psi(a(r,f,D))}$
if
\begin{equation}\label{conv}
\int_{t_0}^\infty \frac{dt}{\psi(t)} <\infty
\end{equation}
and if $r\notin F$ is sufficiently large, but~(\ref{wvlike})
need not hold in this disk if
\begin{equation}\label{div}
\int_{t_0}^\infty \frac{dt}{\psi(t)} =\infty.
\end{equation}
The ``interesting'' functions for conditions (\ref{conv}) and (\ref{div})
are functions like
$$\psi(t)=t\left(\log t\right)^\alpha$$
or, more generally,
$$\psi(t)=t \cdot \log t \cdot \log \log t \cdot \ldots  \cdot\log^{m-1}t
\cdot \left(\log^m t\right)^{\alpha},$$
where $\alpha>0$ and $m\in\N$.
Here (\ref{conv}) holds for $\alpha>1$ while (\ref{div}) holds for
$\alpha\leq 1$. For these functions we have
$$1\leq \frac{t\psi'(t)}{\psi(t)}\leq 1+o(1)$$
as $t\to\infty$. Therefore it does not seem to be
a severe restriction
to impose the condition that $\psi$ is differentiable and
satisfies
\begin{equation}\label{reg}
K\leq \frac{t\psi'(t)}{\psi(t)}\leq L
\end{equation}
for certain constants $K$ and $L$ satisfying $0\leq K\leq 1<L$.

Our results are as follows.
\begin{theorem} \label{th2}
Let $t_0>0$ and let
$\psi:[t_0,\infty)\to (0,\infty)$ be a differentiable function
satisfying~\textup{(\ref{conv})}
and~\textup{(\ref{reg})} for some $K>0$ and $L<2$.

Let $f$ be a function with a direct tract $D$ and
let $z_r\in D$ with $|z_r|=r$ and $|f(z_r)|= M(r,f,D)$.
Then there exists a set $F$ of finite logarithmic
measure such that
\begin{equation}\label{wvlikenew}
f(z)\sim \left(\frac{z}{z_r}\right)^{a(r,f,D)}f(z_r)
\quad \text{for} \quad
|z-z_r|\leq \frac{r}{\sqrt{\psi(a(r,f,D))}}
\end{equation}
as $r\to\infty$, $r\notin F$.
\end{theorem}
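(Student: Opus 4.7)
The plan is to adapt the proof of (\ref{wvlike})--(\ref{wvsize}) from~\cite{BRS}, replacing the power weight $t^{2\tau}$ (with $\tau>1/2$) by the general function $\psi(t)$ permitted by~(\ref{conv}) and~(\ref{reg}). Let $T(s):=\log M(e^s,f,D)$ and $u(s):=a(e^s,f,D)$. The convexity of $T$ in $s$ gives $T'=u$ almost everywhere and, for $h>0$,
$$0\le T(s+h)-T(s)-u(s)\,h\le h\bigl[u(s+h)-u(s)\bigr].$$
In a disk around $z_r$ contained in $D$, define $g(z):=\log f(z)-\log f(z_r)-a(r,f,D)\log(z/z_r)$; the formula for $a(r,f,D)$ gives $g(z_r)=g'(z_r)=0$, and $\operatorname{Re} g(z)\le T(\log|z|)-T(\log r)-u(\log r)\log(|z|/r)$ on the circle $|z|=re^h$. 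The Borel--Carath\'eodory inequality applied to $g$ on a slightly enlarged disk then reduces~(\ref{wvlikenew}) to the estimate
\begin{equation}\label{plangoal}
u(s+h)-u(s)=o\bigl(\sqrt{\psi(u(s))}\bigr)\quad\text{for}\quad h=\frac{1}{\sqrt{\psi(u(s))}},
\end{equation}
valid as $s\to\infty$ outside a set of finite Lebesgue measure.

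To establish~(\ref{plangoal}), I use a Borel-type lemma. Since~(\ref{conv}) holds, a standard argument produces an auxiliary function $\phi:[u_0,\infty)\to[1,\infty)$ with $\phi\to\infty$ and $\int_{u_0}^\infty\phi(t)\,dt/\psi(t)<\infty$. The classical Borel covering lemma, applied to the nondecreasing function $u(s)$ with weight $\psi/\phi$, gives
$$u\bigl(s+\phi(u(s))/\psi(u(s))\bigr)\le u(s)+1$$
outside a set of finite Lebesgue measure. Iterating this estimate $N\asymp\sqrt{\psi(u(s))/\phi(u(s))}$ times, and invoking~(\ref{reg}) with $L<2$ to keep $\psi$ and $\phi$ comparable over the range of $u$-values traversed, we obtain $u(s+h)-u(s)\le\sqrt{\psi(u(s))/\phi(u(s))}=o(\sqrt{\psi(u(s))})$ for $h=1/\sqrt{\psi(u(s))}$, which is exactly~(\ref{plangoal}).

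The principal difficulty is the iteration: one must verify that the exceptional sets arising from the $N$ successive applications of Borel's lemma combine into a single set of finite logarithmic measure in $r$, and that $\psi$ and $\phi$ stay comparable to $\psi(u(s))$ and $\phi(u(s))$ throughout. Both requirements are controlled precisely by the regularity~(\ref{reg}) with $K>0$ and $L<2$: the upper bound $L<2$ is the sharp threshold at which $\psi(u+\sqrt{\psi(u)})/\psi(u)$ remains bounded (as one checks directly on the model case $\psi(t)=t^\alpha$), and $K>0$ provides enough lower regularity on $\psi$ to transfer between logarithmic measure in $r$ and Lebesgue measure in $s=\log r$ and to control the comparison of $\phi$-values along the iteration.
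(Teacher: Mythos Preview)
Your reduction to the growth estimate~(\ref{plangoal}) via convexity of $T$ and Borel--Carath\'eodory is correct and is exactly what the paper does (deferring to~\cite{BRS} for that step). The gap is in how you propose to prove~(\ref{plangoal}). A single application of the classical Borel lemma gives $u(s+\phi(u(s))/\psi(u(s)))\le u(s)+1$ for $s$ outside a fixed set $E$ of finite measure; to iterate this $N(s)\to\infty$ times you need all the intermediate points $s_1,\dots,s_{N-1}$ to avoid $E$ as well, i.e.\ you need the whole interval $[s,s+h(s)]$ to miss~$E$. But the set $\{s:[s,s+h(s)]\cap E\neq\emptyset\}$ can have infinite measure even when $\meas E<\infty$ and $h(s)\to 0$ (think of $E$ as a union of infinitely many short intervals). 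The regularity~(\ref{reg}) constrains $\psi$, not the fine structure of~$E$, so your assertion that it ``controls'' this difficulty is unsubstantiated. Making the iteration rigorous would force you back inside the proof of Borel's lemma to exploit the explicit covering of~$E$---at which point you are essentially reproving what the paper does directly.

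The paper avoids iteration altogether by proving a two-parameter Borel lemma (Lemma~\ref{growthlemma1a}): for nondecreasing $T$ and nondecreasing $\sigma_1,\sigma_2$ with $\int dt/(\sigma_1(t)\sigma_2(t))<\infty$ and $0\le t\sigma_2'(t)/\sigma_2(t)\le 1-\delta$, one has
\[
T\!\left(x\pm\frac{1}{\sigma_1(T(x))}\right)\lessgtr T(x)\pm\sigma_2(T(x))
\]
for $x$ outside a set of finite measure. The paper then applies this once with $T=u$ and the single choice $\sigma_1(t)=\sigma_2(t)=V(t)^{K/2}\sqrt{\psi(t)}$, where $V(t)=\int_t^\infty du/\psi(u)$. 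Condition~(\ref{reg}) with $K>0$, $L<2$ is used precisely to verify the hypotheses on $\sigma_2$ (this is where $L<2$ enters, via $-tV'(t)/V(t)\le L-1$). Since $V(t)\to 0$, one gets simultaneously $1/\sigma_1(t)\ge 1/\sqrt{\psi(t)}$ and $\sigma_2(t)=o(\sqrt{\psi(t)})$, which yields~(\ref{plangoal}) in a single stroke with a single exceptional set. This is the missing idea in your sketch: choose $\sigma_1,\sigma_2$ so that the ``little-$o$'' is built into the Borel lemma itself rather than manufactured afterwards by iteration.
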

\begin{theorem} \label{th3}
Let $t_0>0$ and let
$\psi:[t_0,\infty)\to (0,\infty)$ be a differentiable function
satisfying~\textup{(\ref{div})}
and~\textup{(\ref{reg})} for $K=1$ and some $L<\frac65$.

Then there exists an entire function $f$ which has exactly one tract
$D$ such that if $r$ is sufficiently large and $|z|=r$, then
the disk of radius $r/\sqrt{\psi(a(r,f,D))}$ around $z$
contains a zero of~$f$.
\end{theorem}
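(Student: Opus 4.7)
The plan is to construct $f$ explicitly as a canonical product whose zeros lie on a sequence of concentric circles $|z|=r_k$, placing $n_k$ evenly-spaced zeros on the $k$th circle. The radii $r_k$ and multiplicities $n_k$ will be calibrated to the growth profile dictated by the ordinary differential equation $r\,a'(r)=\psi(a(r))$, so that the zero set is as dense as the prescribed disk radius demands while $a(r,f,D)$ realizes this profile asymptotically. The alignment between the density of the zeros and the value of $a(r,f,D)$ is what turns the qualitative divergence in \eqref{div} into a quantitative obstruction to \eqref{wvlike}.

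First I would let $a(r)$ be the solution on $[r_0,\infty)$ of $r\,a'(r)=\psi(a(r))$ with $a(r_0)=t_0$, so that $\log(r/r_0)=\int_{t_0}^{a(r)}du/\psi(u)$. By \eqref{div} this is globally defined with $a(r)\to\infty$, while \eqref{reg} yields crude power bounds for $a(r)$. I would then define $r_k$ and $n_k$ recursively by $r_{k+1}:=r_k\bigl(1+1/\sqrt{\psi(a(r_k))}\bigr)$ and $n_k:=\lfloor 2\pi\sqrt{\psi(a(r_k))}\rfloor$, choose angles $\theta_k\in[0,2\pi/n_k)$ so that consecutive zero patterns are slightly offset, and set
\[
f(z):=\prod_{k=1}^\infty\left(1-\left(\frac{z\,e^{-i\theta_k}}{r_k}\right)^{n_k}\right).
\]
Rapid decay of $(|z|/r_k)^{n_k}$ in $k$ makes this an entire function whose zeros are the points $r_k e^{i\theta_k}\omega_k^{j}$ with $\omega_k=e^{2\pi i/n_k}$ and $0\leq j<n_k$.

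Next I would establish the growth and density claims. Factor-wise bounds yield
\[
\log M(r,f)=(1+o(1))\sum_{r_k\leq r}n_k\log\frac{r}{r_k},
\]
which is asymptotically piecewise linear in $\log r$ with slope $N(r):=\sum_{r_k\leq r}n_k$. From the recursion on $r_k$ one obtains $a(r_{k+1})-a(r_k)\sim n_k$, and telescoping gives $N(r)\sim a(r)$; hence $a(r,f,D)\sim a(r)$. Adjacent zeros on $|z|=r_k$ have Euclidean separation $\sim r_k/n_k\sim r_k/\sqrt{\psi(a(r_k))}$, and consecutive circles are at radial distance of the same order, so every point of $|z|=r$ lies within Euclidean distance $O(r/\sqrt{\psi(a(r,f,D))})$ of some zero of $f$, which is the density claim.

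The main obstacle is to verify that $f$ has \emph{exactly} one direct tract, as the theorem demands. Near a zero $z_0$ on $|z|=r_k$, linearizing the $k$th factor gives $|f(z)|\asymp (n_k|z-z_0|/r_k)\,M(r_k,f)$, so that for fixed $R$ and large $k$ the sublevel set $\{|f|\leq R\}$ splits into disjoint small disks of radius $\asymp r_k R/(n_k M(r_k,f))$ around the zeros. Since $M(r_k,f)\to\infty$, these radii are eventually much smaller than both the angular spacing $r_k/n_k$ and the radial gap $r_{k+1}-r_k\sim r_k/n_k$; the small disks are therefore pairwise disjoint, their complement is connected, and $\{|f|>R\}$ has a single unbounded component, the sought tract $D$. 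The sharper hypothesis $L<6/5$ in \eqref{reg}, rather than the weaker $L<2$ used in Theorem~\ref{th2}, is what keeps $\psi(a(r))$ (and hence $n_k$) small enough relative to $M(r_k,f)$ to make this quantitative connectivity argument go through uniformly in $k$.
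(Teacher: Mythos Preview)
Your construction is essentially the paper's: the paper also solves $rA_1'(r)=\psi(A_1(r))$ (your $a(r)$), places $\bigl[\sqrt{\psi(A_1(r_k))}\,\bigr]$ evenly spaced zeros on circles $|z|=r_k$ with $r_{k+1}-r_k\sim r_k/\sqrt{\psi(A_1(r_k))}$ (parametrized via $r_k=h(k)$ for $h=g^{-1}$, $g(r)=\int_1^r\sqrt{\psi(A_1(s))}\,ds/s$), and takes the canonical product. The density bound then falls out exactly as you say.

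The genuine gap is the line ``hence $a(r,f,D)\sim a(r)$.'' You have only argued that $\log M(r,f)$ is asymptotic to an antiderivative of your target profile, and asymptotic relations do \emph{not} differentiate: from $\log M(r,f)\sim A_0(r)$ one cannot conclude $a(r,f)=d\log M/d\log r\sim A_1(r)$ without a Tauberian step. The paper invokes a lemma of London for precisely this purpose, and the hypothesis it needs is the bound $A_0(r)A_2(r)/A_1(r)^2\leq 1/(2-L)$, which is where the regularity condition \eqref{reg} enters the asymptotics for $a(r,f)$. Relatedly, you have misplaced the role of $L<6/5$: it is \emph{not} used for the one-tract argument but to control the error term $O\bigl(A_2(r)^{3/2-1/L}(\log r)^2\bigr)$ that arises when replacing the sum $\sum n_k\log(r/r_k)$ by the integral $A_0(r)$; one needs $(3L-2)/(4-2L)<1$, i.e.\ $L<6/5$, to make this $o(A_0(r))$. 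Finally, for the single-tract claim the paper does not linearize near zeros as you propose; it simply shows that on the interlacing circles $|z|=h(n+\tfrac12)$ the minimum modulus tends to infinity, which immediately forces a unique unbounded component of $\{|f|>R\}$.
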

In particular it follows under
the hypotheses of Theorem~\ref{th3} that
the disk mentioned
is not contained in $D$ and that (\ref{wvlikenew}) does
not hold.

\begin{remark}
Our method also yields that
if $f$ is entire and $z_r$ is a point of modulus~$r$
with $|f(z_r)|=M(r,f)$, then~\eqref{wvlikenew} holds with
$a(r,f,D)$ replaced by $a(r,f)$. Here we only note that
if $D_r$ is the direct tract containing $z_r$, then
$$a(r,f)=a(r,f,D_r)=\frac{z_r f'(z_r)}{f(z_r)}$$
except for an at most countable set of $r$-values.

We also note that if $\psi$ satisfies~\textup{(\ref{conv})},
then
\begin{equation}\label{alogM}
a(r,f,D)\leq \psi(\log M(r,f,D))
\end{equation}
outside a set of finite
logarithmic measure. In fact, if $s_0:=\log M(r_0,f,D)\geq t_0$
and  if $F$ denotes the set of all $r\geq r_0$ where~\eqref{alogM}
does not hold, then
$$
\int_F\frac{dt}{t} \leq \int_F\frac{a(t,f,D)}{\psi(\log M(t,f,D))} \frac{dt}{t}
\leq \int_{r_0}^\infty \frac{a(t,f,D)}{\psi(\log M(t,f,D))} \frac{dt}{t}
=\int_{s_0}^\infty \frac{dt}{\psi(t)}<\infty.$$
We deduce that
the condition $|z-z_r|\leq r/\sqrt{\psi(a(r,f,D))}$ in~\eqref{wvlikenew}
can be replaced by
$$|z-z_r|\leq \frac{r}{\sqrt{\psi(\psi(\log M(r,f,D)))}}.$$
For entire $f$ we can again
replace $M(r,f,D)$ by $M(r,f)$ if $|f(z_r)|=M(r,f)$.
With $\psi(t)=t^{1+\delta}$ we recover Macintyre's condition~\eqref{Meps}.
\end{remark}
\begin{remark} \label{rem1}
In the papers on Wiman-Valiron theory cited above it is
usually not required that $|f(z_r)|=M(r,f)$ but only
that $|f(z_r)|\geq \eta M(r,f)$ for some $\eta\in (0,1)$,
possibly depending on~$r$. It is then shown that~\eqref{wv} holds for
$z$ in some disk around~$z_r$ whose size depends on $\eta$.
In~\cite{BRS} only the case $\eta=1$ is considered, although the
method allows to deal with the case $0< \eta <1$ as well.
For the sake of simplicity we also restrict to the case $\eta=1$
in this paper.
\end{remark}
\begin{remark} \label{rem2}
It was shown in~\cite{Bergweiler1990a} that the estimate on the size
of the exceptional set~$F$ is best possible in Wiman-Valiron theory,
and it follows from the results there that this also holds for
Macintyre's theory and Theorem~\ref{th2}.
\end{remark}
\begin{remark} \label{rem3}
We do not discuss the numerous applications that the theories of
Wiman-Valiron and Macintyre have found, but just mention some
references with applications to complex  differential
equations~\cite{ELN,Jank1985,Laine1993,Wittich1968},
distribution of zeros of derivatives~\cite{Clunie1989,Langley1993},
and complex dynamics~\cite{BRS,Eremenko1989,Hua98}.
\end{remark}

\section{Proof of Theorem~\ref{th2}}
Let $D$ be a direct tract of~$f$.
The proof in~\cite{BRS} that~\eqref{wvlike}
holds for $z$ satisfying~\eqref{wvsize} relies on a lemma~\cite[Lemma~11.3]{BRS}
which says that if $\beta>\frac12$, then
there exists a set $F$ of finite logarithmic measure
such that
\begin{equation} \label{growthlemma3.1}
\log M(s,f,D) \leq \log M(r,f,D) +a(r,f,D)\log\frac{s}{r}+o(1)
\end{equation}
for
\begin{equation} \label{growthlemma3.2}
\left|\log\frac{s}{r}\right|\leq \frac{1}{a(r,f,D)^\beta},
\end{equation}
uniformly as $r\to\infty$, $r\notin F$.
In order to prove  Theorem~\ref{th2} we shall prove that
if $\psi$ satisfies the hypothesis of this theorem,
then~\eqref{growthlemma3.2} can be replaced by
\begin{equation} \label{growthlemma3a}
\left|\log\frac{s}{r}\right|\leq \frac{1}{\sqrt{\psi(a(r,f,D))}}.
\end{equation}
In order to prove that~\eqref{growthlemma3.1} holds under
the assumption~\eqref{growthlemma3a}
we use the following lemma.
\begin{lemma} \label{growthlemma1a}
Let $x_0>0$ and let $T:
[x_0,\infty)\to (0,\infty)$ be nondecreasing.
Let $t_0:=T(x_0)$ and let $\sigma_1,\sigma_2:[t_0,\infty)\to(0,\infty)$ be
nondecreasing functions such that
$$
\int_{t_0}^\infty\frac{dt}{\sigma_1(t)\sigma_2(t)}<\infty.$$
Suppose also that $\sigma_2$ is differentiable
and satisfies
$$
0\leq \frac{t\sigma_2'(t)}{\sigma_2(t)}\leq 1-\delta
$$
for $t\geq t_0$ and some $\delta >0$.
Then there exists a set $E\subset [x_0,\infty)$ of finite measure
such that if $x\notin E$, then
\begin{equation}
T\left(x+\frac{1}{\sigma_1(T(x))}\right) <
T(x) + \sigma_2(T(x))     \label{upperboundTa}
\end{equation}
and
\begin{equation}
T\left(x-\frac{1}{\sigma_1(T(x))}\right) >
T(x) - \sigma_2(T(x)) .     \label{lowerboundTa}
\end{equation}
\end{lemma}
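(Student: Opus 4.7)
My plan is to reduce the statement to a Borel-type greedy argument after normalizing the $\sigma_2$-size of the jumps of $T$ by a change of variable. Set $\Phi(t) = \int_{t_0}^t ds/\sigma_2(s)$ and $U(x) = \Phi(T(x))$. From $t\sigma_2'(t)/\sigma_2(t) \leq 1-\delta$ I obtain $\sigma_2(t) \leq C t^{1-\delta}$, so $\sigma_2(t)/t \to 0$ and
\begin{equation*}
\frac{\sigma_2(t+\sigma_2(t))}{\sigma_2(t)} \leq \left(1+\frac{\sigma_2(t)}{t}\right)^{1-\delta} \longrightarrow 1.
\end{equation*}
Pick $t_1 \geq t_0$ so that $\sigma_2(t+\sigma_2(t)) \leq 2\sigma_2(t)$ for $t \geq t_1$, and $x_1 \geq x_0$ with $T(x_1) \geq t_1$. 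Then for $t \geq t_1$, since $\sigma_2$ is nondecreasing, $\Phi(t+\sigma_2(t)) - \Phi(t) \geq \sigma_2(t)/(2\sigma_2(t)) = 1/2$.

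Next I treat~\eqref{upperboundTa}. Let $E_1$ be the set of $x \geq x_1$ for which~\eqref{upperboundTa} fails and build a greedy disjoint cover of $E_1$: put $\xi_1 = \inf E_1$, $\eta_n = \xi_n + 1/\sigma_1(T(\xi_n))$, and $\xi_{n+1} = \inf(E_1 \cap (\eta_n,\infty))$. Since no point of $E_1$ lies between $\eta_n$ and $\xi_{n+1}$, $E_1 \subset \bigcup_n [\xi_n,\eta_n]$, and the intervals $[\xi_n,\eta_n]$ are pairwise disjoint; therefore
\begin{equation*}
|E_1| \leq \sum_n (\eta_n - \xi_n) = \sum_n \frac{1}{\sigma_1(T(\xi_n))}.
\end{equation*}
As $\xi_n \in E_1$, $T(\eta_n) \geq T(\xi_n) + \sigma_2(T(\xi_n))$, which combined with the first-paragraph computation yields $U(\xi_{n+1}) \geq U(\eta_n) \geq U(\xi_n) + 1/2$.

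Set $\tilde\sigma_1 := \sigma_1 \circ \Phi^{-1}$, which is again nondecreasing. Using $U(\xi_{n+1}) - U(\xi_n) \geq 1/2$,
\begin{equation*}
\frac{1}{2\tilde\sigma_1(U(\xi_{n+1}))} \leq \int_{U(\xi_n)}^{U(\xi_{n+1})} \frac{du}{\tilde\sigma_1(u)},
\end{equation*}
and summing over $n$ with the substitution $u = \Phi(t)$, $du = dt/\sigma_2(t)$, gives
\begin{equation*}
\sum_{n\geq 1} \frac{1}{\tilde\sigma_1(U(\xi_{n+1}))} \leq 2\int_{U(\xi_1)}^\infty \frac{du}{\tilde\sigma_1(u)} = 2\int_{T(\xi_1)}^\infty \frac{dt}{\sigma_1(t)\sigma_2(t)} < \infty.
\end{equation*}
Since $\sigma_1(T(\xi_n)) = \tilde\sigma_1(U(\xi_n))$, the series $\sum 1/\sigma_1(T(\xi_n))$ converges, so $|E_1| < \infty$ after including the bounded interval $[x_0,x_1]$. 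The bound for~\eqref{lowerboundTa} is symmetric: apply a Vitali covering to $\{[x - 1/\sigma_1(T(x)), x]\}_{x \in E_2}$ to extract disjoint intervals with right endpoints $x_k$, whose disjointness combined with $x_k \in E_2$ gives $T(x_{k+1}) \geq T(x_k) + \sigma_2(T(x_k))$, after which the same integral estimate yields $|E_2| < \infty$. The main technical obstacle is that no regularity is imposed on $\sigma_1$, so $1/\sigma_1(T(\xi_n))$ cannot be compared directly with $1/\sigma_1(T(\eta_n))$; the change of variable via $\Phi$ bypasses this by converting the variable-size $\sigma_2$-jumps of $T$ into constant-size jumps of $U$, after which the telescoping integral bound goes through independently of the behaviour of $\sigma_1$.
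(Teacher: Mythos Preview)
Your argument is correct and follows essentially the same Borel-type greedy strategy as the paper: the paper normalizes via $G(t)=t/\sigma_2(t)$ whereas you use $\Phi(t)=\int_{t_0}^t ds/\sigma_2(s)$ (these are bi-Lipschitz equivalent since $\delta/\sigma_2\le G'\le 1/\sigma_2=\Phi'$), and for $E_2$ the paper runs a backward greedy selection on a bounded interval while you appeal to a Vitali-type covering, but the integral comparison $\sum 1/\sigma_1(T(\xi_n))\lesssim\int dt/(\sigma_1\sigma_2)$ is reached in the same way. One small technical slip: since $T$ is only nondecreasing, $E_1$ need not be closed, so $\xi_1=\inf E_1$ may fail to lie in $E_1$; choose instead $\xi_n\in E_1$ within $2^{-n}$ of the relevant infimum (as the paper does) so that the failure of~\eqref{upperboundTa} at $\xi_n$ is actually available.
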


\begin{proof}
First we note that
$x-1/\sigma_1(T(x))\geq x_0$ for
sufficiently large $x$, say $x\geq x_0'$.
Thus the left hand side
of~\eqref{lowerboundTa} is defined for  $x\geq x_0'$.
Denoting by $E_1$ the subset of $[x_0,\infty)$ where~\eqref{upperboundTa}
fails and by $E_2$ the subset of $[x_0',\infty)$
where~\eqref{lowerboundTa} fails we can thus
take $E=[x_0,x_0']\cup E_1\cup E_2$.

We put $G(t):=t/ \sigma_2(t)$.
Since
$$
\frac{tG'(t)}{G(t)}=1-\frac{t\sigma_2'(t)}{\sigma_2(t)}\geq \delta
$$
the function  $G$ is increasing and hence
\begin{eqnarray*}
G\left(t+\sigma_2(t)\right) - G(t) & = &\int_{t}^{t+\sigma_2(t)} G'(u)du\\
&\geq & \delta \int_{t}^{t+\sigma_2(t)} \frac{G(u)}{u}du\\
&\geq & \delta G(t) \int_{t}^{t+\sigma_2(t)} \frac{du}{u}\\
&= & \delta G(t) \log\left(1+\frac{1}{G(t)}\right)
\end{eqnarray*}
for $t\geq t_0$. Since the function $x\mapsto x \log\left(1+1/x\right)$
is increasing for $x>0$ we deduce that
\begin{equation} \label{G+}
G\left(t+\sigma_2(t)\right) - G(t)
\geq \eta:=  \delta G(t_0) \log\left(1+\frac{1}{G(t_0)}\right)>0
\end{equation}
for $t\geq t_0$.
Similarly,
\begin{eqnarray}
G(t) -G\left(t-\sigma_2(t)\right) & =& \int_{t-\sigma_2(t)}^{t} G'(u)du\nonumber\\
&\geq & \delta \int_{t-\sigma_2(t)}^{t} \frac{G(u)}{u}du\nonumber\\
&= & \delta \int_{t-\sigma_2(t)}^{t} \frac{du}{\sigma_2(u)}  \label{G-}\\
&\geq & \delta \frac{1}{\sigma_2(t)} \int_{t-\sigma_2(t)}^{t} du\nonumber\\
&= & \delta\nonumber
\end{eqnarray}
for $t\geq t_0$.

To estimate the size of $E_1$ we may assume that $E_1$ is unbounded.
We choose $x_1\in E_1\cap [\inf E_1,\inf E_1 +\tfrac12]$ and
put $x_1':=x_1+1/\sigma_1(T(x_1))$.
Recursively we then choose
$$x_j\in E_1\cap \left[\inf \left(E_1\cap [x_{j-1}',\infty)\right),
\inf \left(E_1\cap [x_{j-1}',\infty)\right)+2^{-j}\right]$$
and put $x_j':=x_j+1/\sigma_1(T(x_j))$.
Then
$$T(x_{j+1})\geq T(x_{j}')=T\left(x_j+\frac{1}{\sigma_1(T(x_j))}\right)
\geq T(x_j)+ \sigma_2(T(x_j))$$
and hence
$$G(T(x_{j+1}))\geq G\left(T(x_j)+ \sigma_2(T(x_j))\right)
\geq G\left(T(x_j)\right)+\eta$$
by~\eqref{G+}. Induction shows that
\begin{equation}\label{GTxj}
G(T(x_{j}))\geq G\left(T(x_1)\right)+(j-1)\eta
\end{equation}
for $j\in\N$.
In particular it
follows that $x_j\to\infty$ so that
$$E_1\subset \bigcup_{j=1}^\infty \left[x_j-2^{-j},x_j'\right].$$
Hence
$$\meas E_1\leq \sum_{j=1}^\infty \left(x_j'-x_j+2^{-j}\right)
=\sum_{j=1}^\infty\frac{1}{\sigma_1(T(x_j))}+1.$$
With $H:=\sigma_1\circ G^{-1}$ and $u_0:= G\left(T(x_1)\right)$
we deduce from~\eqref{GTxj} that
$$\sigma_1(T(x_j))=H(G(T(x_j)))\geq H(u_0+(j-1)\eta).$$
Hence
$$\sum_{j=2}^\infty\frac{1}{\sigma_1(T(x_j))}
\leq
\sum_{j=2}^\infty\frac{1}{ H(u_0+(j-1)\eta)}
\leq \frac{1}{\eta}\int_{u_0}^\infty \frac{du}{H(u)}
= \frac{1}{\eta}\int_{T(x_1)}^\infty \frac{G'(v)}{\sigma_1(v)}dv.$$
Since
$$G'(v)=\frac{1}{\sigma_2(v)}-\frac{v\sigma_2'(v)}{\sigma_2(v)^2}
\leq \frac{1}{\sigma_2(v)}$$
we obtain
$$\sum_{j=2}^\infty\frac{1}{\sigma_1(T(x_j))}\leq
\frac{1}{\eta}\int_{T(x_1)}^\infty \frac{dv}{\sigma_1(v)\sigma_2(v)}<\infty.$$
Altogether we have
$$\meas E_1\leq \frac{1}{\sigma_1(t_0)}
+\frac{1}{\eta}\int_{t_0}^\infty \frac{dv}{\sigma_1(v)\sigma_2(v)}
+1
<\infty.$$

To estimate $E_2$ we proceed similarly. We may assume that
$E_2\neq \emptyset$ and fix $R>x_0'$ so large that $E_2\cap [x_0',R]
\neq \emptyset$.
We choose
$$z_1\in
E_2\cap \left[\sup \left(E_2\cap [x_0',R]\right)-\tfrac12,
\sup \left(E_2\cap [x_0',R]\right)\right]$$
and put
$z_1':=z_1-1/\sigma_1(T(z_1))$.
Recursively we then choose
$$z_j\in E_2\cap \left[\sup \left(E_2\cap [x_0',z_{j-1}']\right)-2^{-j},
\sup \left(E_2\cap [x_0',z_{j-1}']\right)\right]$$
and
put $z_j':=z_j-1/\sigma_1(T(z_j))$,
as long as $E_2\cap [x_0',z_{j-1}'] \neq \emptyset$.
However, since
\begin{eqnarray*}
T(z_{j+1})
&\leq&
T(z_{j}')\\
&=&
T\left(z_j-\frac{1}{\sigma_1(T(z_j))}\right)\\
&\leq&
 T(z_j)-\sigma_2(T(z_j))\\
&=&
\left(1-\frac{1}{G( T(z_j))}\right) T(z_j)\\
&\leq&
\left(1-\frac{1}{G( T(z_1))}\right) T(z_j)\\
&\leq&
\left(1-\frac{1}{G( T(z_1))}\right)^j T(z_1),\\
\end{eqnarray*}
the process stops and we obtain two finite
sequences $(z_1,\dots,z_N)$ and $(z_1',\dots,z_N')$ with
$$E_2\cap [x_0',R] \subset \bigcup_{j=1}^N [z_j',z_j+2^{-j}].$$
With $y_j:=z_{N-j+1}$ we thus have
$$E_2\cap [x_0',R] \subset \bigcup_{j=1}^N [y_j',y_j+2^{j-N-1}]$$
and
$$T(y_{j})\leq T(y_{j+1})-\sigma_2(T(y_{j+1})).$$
Hence
$$G(T(y_{j}))\leq G\left( T(y_{j+1})-\sigma_2(T(y_{j+1}))\right)
\leq  G\left( T(y_{j+1})\right) -\delta$$
by~\eqref{G-} and thus
$$G(T(y_{j}))\geq G(T(y_{1}))+(j-1)\delta$$
by induction.
Now the estimate for $E_2$ is very similar to that for $E_1$.
We obtain
\begin{eqnarray*}
\meas \left(E_2\cap[x_0',R]\right)
&\leq& \sum_{j=1}^N (y_j-y_j'+2^{j-N-1})\\
&=&\sum_{j=1}^N \frac{1}{\sigma_1(T(y_j))} +\sum_{j=1}^N 2^{j-N-1}\\
&\leq& \frac{1}{\sigma_1(T(y_1))}
+\frac{1}{\delta} \int_{T(y_1)}^\infty \frac{du}{H(u)} +1\\
&\leq& \frac{1}{\sigma_1(t_0)}
+\frac{1}{\delta} \int_{t_0}^\infty \frac{du}{\sigma_1(u)\sigma_2(u)} +1
\end{eqnarray*}
and hence $\meas E_2 <\infty$.
\end{proof}
\begin{rem}
Lemma~\ref{growthlemma1a}  was proved in~\cite[Lemma 11.1]{BRS}
in the 
case  
that
$\sigma_1(t)=t^\beta$ and $\sigma_2(t)=t^{1-\alpha}$
where $0<\alpha<\beta$.
The method
of proof used here is similar, going back to a classical lemma of
Borel; see~\cite[\S 3.3]{CherryYe2001},
\cite[p.~90]{GO} and~\cite{Nevanlinna1931}.
\end{rem}
Similarly as in~\cite{BRS} we apply Lemma~\ref{growthlemma1a}
to the (right) derivative $\Phi'$ of a convex function~$\Phi$.
\begin{lemma} \label{growthlemma2a}
Let $x_0>0$ and let $\Phi:
[x_0,\infty)\to (0,\infty)$ be increasing and convex.
Let $t_0:=\Phi(x_0)$ and let $\psi:[t_0,\infty)\to(0,\infty)$ be
a differentiable function satisfying~\eqref{conv}
and~\eqref{reg} with $K>0$ and $L<2$.
Then there exists a set $E\subset [x_0,\infty)$ of finite measure
such that
\begin{equation}
\Phi(x+h)\leq\Phi(x)+\Phi'(x)h+o(1) \quad \text{for}\quad
|h|\leq \frac{1}{\sqrt{\psi(\Phi'(x))}},
\quad x\notin E,
\label{2e}
\end{equation}
uniformly as $x\to\infty$.
\end{lemma}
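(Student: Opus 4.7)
The plan is to apply Lemma~\ref{growthlemma1a} to the nondecreasing function $T:=\Phi'$ (the right derivative of the convex function $\Phi$) with $\sigma_1(t):=\sqrt{\psi(t)}$, so that the radius $1/\sigma_1(T(x))$ on which Lemma~\ref{growthlemma1a} controls $T$ matches the radius $1/\sqrt{\psi(\Phi'(x))}$ appearing in~\eqref{2e}. The task then reduces to choosing $\sigma_2$ satisfying three simultaneous requirements: $\sigma_2/\sigma_1=o(1)$ at infinity (to produce the $o(1)$ term in~\eqref{2e}); $\int^{\infty}dt/(\sigma_1\sigma_2)<\infty$ (the summability hypothesis of Lemma~\ref{growthlemma1a}); and the regularity condition $0\leq t\sigma_2'(t)/\sigma_2(t)\leq 1-\delta$ for some $\delta>0$.

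I would set $\sigma_2(t):=\sqrt{\psi(t)}/g(t)$ for an auxiliary nondecreasing differentiable function $g$ with $g(t)\to\infty$, $0\leq tg'(t)/g(t)\leq K/2$, and $\int^{\infty}g(t)/\psi(t)\,dt<\infty$. The existence of such $g$ is guaranteed by~\eqref{conv}; one concrete choice is $g(t):=1/\sqrt{u(t)}$ with $u(t):=\int_t^{\infty}ds/\psi(s)$, smoothed if necessary so that $tg'/g$ stays below $K/2$. With this choice, the identity
\begin{equation*}
\frac{t\sigma_2'(t)}{\sigma_2(t)}=\frac{1}{2}\frac{t\psi'(t)}{\psi(t)}-\frac{tg'(t)}{g(t)}
\end{equation*}
combined with $K\leq t\psi'/\psi\leq L<2$ gives $0\leq t\sigma_2'/\sigma_2\leq L/2$, so one may take $\delta=1-L/2>0$. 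The hypothesis $K>0$ enters precisely at the lower bound (leaving room for $g$ to grow while keeping $\sigma_2$ nondecreasing), and $L<2$ at the upper bound (ensuring $\delta>0$). I expect the main technical obstacle to be the simultaneous satisfaction of these three conditions on $g$.

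Once Lemma~\ref{growthlemma1a} has produced an exceptional set $E$ of finite measure, the conclusion~\eqref{2e} follows from convexity: for $x\notin E$ and $0<h\leq 1/\sigma_1(T(x))$, monotonicity of $\Phi'$ together with~\eqref{upperboundTa} yields
\begin{equation*}
0\leq \Phi(x+h)-\Phi(x)-\Phi'(x)h=\int_x^{x+h}(\Phi'(s)-\Phi'(x))\,ds\leq h\,\sigma_2(T(x))\leq\frac{1}{g(T(x))},
\end{equation*}
and the analogous argument for $-1/\sigma_1(T(x))\leq h<0$ uses~\eqref{lowerboundTa}. Since $T(x)=\Phi'(x)\to\infty$ in the intended application (by Fuchs's theorem), $1/g(T(x))\to 0$, yielding the $o(1)$ estimate uniformly in $h$ and completing the proof.
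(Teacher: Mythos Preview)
Your approach is the paper's: apply Lemma~\ref{growthlemma1a} to $T=\Phi'$ with $\sigma_1,\sigma_2$ built from $\sqrt{\psi}$ and a power of the tail $V(t)=\int_t^\infty du/\psi(u)$, then use convexity to pass from control of $\Phi'$ to~\eqref{2e}. The paper makes the symmetric choice $\sigma_1=\sigma_2=V^{K/2}\sqrt{\psi}$; you put all the decay into $\sigma_2$ via $\sigma_1=\sqrt{\psi}$ and $\sigma_2=\sqrt{\psi}/g$. Either allocation works, and your final convexity estimate is exactly the paper's.

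Two small points to tighten. First, your concrete choice $g=V^{-1/2}$ need not satisfy $tg'/g\leq K/2$: the key inequality $-tV'(t)/V(t)\leq L-1$ (which the paper proves from~\eqref{reg}) only yields $tg'/g\leq (L-1)/2$, and this can exceed $K/2$ when $K<L-1$. No ``smoothing'' is needed---just lower the exponent. Taking $g=V^{-K/2}$ gives $tg'/g\leq K(L-1)/2<K/2$ since $L<2$, while $\int g/\psi\,dt=-\int V^{-K/2}\,dV<\infty$ since $K/2<1$ (the paper assumes $K<1$ without loss of generality). Second, the lemma as stated does not assume $\Phi'(x)\to\infty$; the paper disposes of the bounded-derivative case in one sentence at the start, and you should do the same rather than appealing to Fuchs's theorem from the downstream application.
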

\begin{proof}
First we note that $\lim_{x\to\infty} \Phi'(x)$
exists since $\Phi'$ is nondecreasing.
It is easy to see that~\eqref{2e} holds without an exceptional set $E$ if
this limit is finite. Hence we assume that
$\lim_{x\to\infty} \Phi'(x)=\infty$.

Let
$$V(t):=\int_t^\infty \frac{du}{\psi(u)}$$
so that $V'(t)=-1/\psi(t)$.
We may assume that $K<1$
and apply Lemma \ref{growthlemma1a} with $T=\Phi'$ and
\begin{equation} \label{s1s2}
\sigma_1(t)=\sigma_2(t)=V(t)^{K/2} \sqrt{\psi(t)}.
\end{equation}
To show that the hypotheses of this lemma are satisfied we note that
$$
\int_{t_0}^t\frac{du}{\sigma_1(u)\sigma_2(u)}
=\int_{t_0}^t \frac{V(u)^{-K}}{\psi(u)}du
=\frac{1}{1-K}
\left(V(t_0)^{1-K}-V(t)^{1-K}\right)
$$
and thus
$$
\int_{t_0}^\infty\frac{du}{\sigma_1(u)\sigma_2(u)} <\infty.$$
We also have
\begin{equation}
\frac{t\sigma_2'(t)}{\sigma_2(t)}
=\frac{K}{2} \frac{tV'(t)}{V(t)}
+\frac12  \frac{t\psi'(t)}{\psi(t)}.
\label{2z}
\end{equation}
Since $V'(t)=-1/\psi(t)< 0$ this implies that
$$  \frac{t\sigma_2'(t)}{\sigma_2(t)}\leq
\frac12 \frac{t\psi'(t)}{\psi(t)}\leq \frac{L}{2}<1.$$
On the other hand,
since $\psi$ is increasing it follows from~\eqref{conv} that
$\psi(t)/t\to\infty$ as $t\to\infty$ and thus we find, using~\eqref{reg},
that
\begin{eqnarray*}
0 & < & -tV'(t)\\
&= & \frac{t}{\psi(t)}\\
&= & \int_t^\infty \left(\frac{u\psi'(u)}{\psi(u)^2} -\frac{1}{\psi(u)}\right)du\\
&= & \int_t^\infty \left(\frac{u\psi'(u)}{\psi(u)}\right) \frac{du}{\psi(u)}-V(t)\\
&\leq & (L-1)V(t).
\end{eqnarray*}
It follows that
$$\frac{tV'(t)}{V(t)} \geq -(L-1)$$
and this, together with~\eqref{reg} and ~\eqref{2z}, implies that
$$  \frac{t\sigma_2'(t)}{\sigma_2(t)}
\geq -\frac{K}{2} (L-1) +\frac{K}{2}=\frac{K(2-L)}{2}>0.$$
Thus the hypotheses of Lemma~\ref{growthlemma1a} are satisfied.

Next we note that~\eqref{s1s2} yields that 
$$\sigma_k(t)=o\left(\sqrt{\psi(t)}\right)$$
as $t\to\infty$ for
$k\in\{1,2\}$.
In particular, we find that
$\sigma_k(t)\leq \sqrt{\psi(t)}$ for large~$t$.
Lemma~\ref{growthlemma1a} now yields that
if $x\notin E$ is large and $0<h\leq 1/\sqrt{\psi\left(\Phi'(x)\right)}$,
then
\begin{eqnarray*}
\Phi(x+h)
&=&
\Phi(x)+\int_x^{x+h} \Phi'(u)du\\
&\leq &
\Phi(x)+\Phi'(x+h)h\\
&\leq &
\Phi(x)+\Phi'\left(x+\frac{1}{\sigma_1\left(\Phi'(x)\right)}\right)h\\
&\leq&
\Phi(x)+\left(\Phi'(x)+\sigma_2\left(\Phi'(x)\right) \right) h\\
&\leq&
\Phi(x)+\Phi'(x) h+\frac{\sigma_2\left(\Phi'(x)\right)}
{\sqrt{\psi\left(\Phi'(x)\right)}}
\end{eqnarray*}
and hence $\Phi(x)+\Phi'(x) h+o(1)$ as $x\to\infty$.
The case
$- 1/\sqrt{\psi\left(\Phi'(x)\right)}\leq h<0$ is analogous.
\end{proof}
\begin{rem}
If we apply Lemma \ref{growthlemma1a}
not to the functions defined by~\eqref{s1s2}, as we did
in the above proof, but to the functions
$\sigma_1(t)=\sigma_2(t)=\sqrt{\psi(t)}$,
then we obtain~\eqref{2e} with $o(1)$ replaced by~$1$.
Choosing $\sigma_1(t)=\sigma_2(t)=\varepsilon\sqrt{\psi(t)}$
yields~\eqref{2e} with $o(1)$ replaced by~$\varepsilon$.
\end{rem}
We apply Lemma~\ref{growthlemma2a}
to $\Phi(x)=\log M(e^x,f,D)$.
Then $\Phi'(x)=a(e^x,f,D)$.
With $r=e^x$ and $s=e^{x+h}$ we obtain
\begin{eqnarray*}
\log M(s,f,D)
&=&\Phi(x+h)\\
&\leq &
\Phi(x)+\Phi'(x)h+o(1)\\
&=&\log M(r,f,D)+a(r,f,D)\log\frac{s}{r}+o(1)
\end{eqnarray*}
for $r\notin F=\exp E$, provided that
$$\left|\log\frac{s}{r}\right|=|h|\leq \frac{1}{\sqrt{\psi(\Phi'(x))}}
=\frac{1}{\sqrt{\psi(a(r,f,D))}}.$$
This means that~\eqref{growthlemma3.1} holds for $r\notin F$
under the assumption~\eqref{growthlemma3a}.

The deduction of Theorem~\ref{th2} from the result
that~\eqref{growthlemma3.1} holds for $s$ satisfying~\eqref{growthlemma3a}
if $r\notin F$
is similar to the arguments in~\cite{BRS} where the validity
of~\eqref{growthlemma3.1} under the stronger condition~\eqref{growthlemma3.2}
is used to show that~\eqref{wvlike} holds for $z$ satisfying~\eqref{wvsize}.

\section{Proof of Theorem~\ref{th3}}
\subsection{Preliminaries}
We first note that~\eqref{div} and ~\eqref{reg}
also hold with $\psi(x)$ replaced by $\alpha\psi(\beta x)$
where $\alpha,\beta>0$, and thus it suffices to show that there
exist $\gamma,\delta>0$ such that the disk of radius
$\gamma r/\sqrt{\psi(\delta a(r,f,D))}$ around $z$ contains a zero
of $f$ if~$|z|=r$ is large.
Moreover, we see that we may assume that $\psi(t_0)\geq t_0\geq 1$.

We define
$A_1:[1,\infty)\to[t_0,\infty)$ by
\begin{equation}\label{defA1}
\log r=\int_{t_0}^{A_1(r)}\frac{du}{\psi(u)}.
\end{equation}
With $\phi:[t_0,\infty)\to[0,\infty)$,
$$\phi(t):=\int_{t_0}^{t}\frac{du}{\psi(u)}$$
we thus have $A_1(r)=\phi^{-1}(\log r)$. The function $f$
constructed will satisfy
$$a(r,f)=a(r,f,D)\sim A_1(r)$$
as $r\to\infty$. However, before we can
define the function $f$ we will have
to introduce some auxiliary functions and study their properties.

We first note
that it follows from~\eqref{reg} and the assumption that $K=1$ that
$$\log\frac{t}{t_0} \leq \log\frac{\psi(t)}{\psi(t_0)}
\leq L \log\frac{t}{t_0}.$$
Using that $\psi(t_0)\geq t_0$ we see that
\begin{equation}\label{estpsi}
t \leq \psi(t)\leq c t^L
\end{equation}
for $t\geq t_0$ and $c:=\psi(t_0)t_0^{-L}$.

It follows from~\eqref{defA1} that $A_1(r)$ is differentiable and
$A_1'(r)=\psi(A_1(r))/r$. This implies that $A_2(r):=r A_1'(r)
=\psi(A_1(r))$ is also
differentiable so that we may define $A_3(r):=r A_2'(r)$.
The functions $A_1,A_2$ and $A_3$ are thus related by
\begin{equation} \label{A23}
A_2(r)= \frac{d A_1(r)}{d\log r}=rA_1'(r)
\quad \text{and} \quad
A_3(r)= \frac{d A_2(r)}{d\log r}=rA_2'(r).
\end{equation}
Since $\psi(t)\geq t$ we have $\phi(t)\leq \log (t/t_0)$ and thus
$A_1(r)\geq t_0 r\geq r$ for $r\geq 1$.
Using~\eqref{estpsi} and recalling that~\eqref{reg}
holds with $K=1$ we find that
$A_2(r)\geq A_1(r)$ and
$$A_3(r) =r A_2'(r)=\psi'(A_1(r)) A_2(r)
\geq \frac{\psi(A_1(r))}{A_1(r)}A_2(r)\geq A_2(r).$$
Putting together the last estimates we thus have
\begin{equation} \label{A321}
A_3(r) \geq A_2(r)\geq  A_1(r)\geq r\geq 1>0
\end{equation}
for $r\geq 1$.
Combining this with~\eqref{A23} we see that $A_1$ and $A_2$ are
increasing and that $A_1(r)$ is a convex function of $\log r$.
Moreover,~\eqref{reg} yields that
\begin{equation} \label{london}
1\leq \frac{A_1(r)\psi'(A_1(r))}{\psi(A_1(r))}
=\frac{A_1(r)A_3(r)}{A_2(r)^2}\leq L.
\end{equation}
For $\rho>1$ and $r>1$ we thus have
\begin{eqnarray*}
\frac{1}{A_2(r)}-\frac{1}{A_2(\rho r)}
&= &
\int_r^{\rho r}
\frac{A_3(s)}{A_2(s)^2}\frac{ds}{s}\\
&\leq & L
\int_r^{\rho r}\frac{1}{A_1(s)}\frac{ds}{s}\\
&\leq &
\frac{L}{A_1(r)}\int_r^{\rho r}\frac{ds}{s}\\
&= &
\frac{L}{A_1(r)}\log\rho.
\end{eqnarray*}
Choosing
\begin{equation}\label{kappa}
\rho:=1+\frac{A_1(r)}{2A_2(r)}
\end{equation}
we obtain
$$
1-\frac{A_2(r)}{A_2(\rho r)}\leq
L\frac{ A_2(r)}{A_1(r)}\log\left(1+\frac{A_1(r)}{2A_2(r)}\right)
\leq
\frac{L}{2}\leq \frac{3}{5}
$$
and hence
\begin{equation}\label{kappa1}
A_2\left(r\left(1+\frac{A_1(r)}{2A_2(r)}\right)\right)
=A_2(\rho r)
\leq \frac52 A_2(r).
\end{equation}
It follows from~\eqref{estpsi} that
\begin{equation}\label{A2A1}
A_2(r)=\psi(A_1(r))\leq cA_1(r)^L
\end{equation}
so that
\begin{equation}\label{A1A2}
A_1(r)\geq  c^{-1/L} A_2(r)^{1/L}.
\end{equation}
Together with~\eqref{london} we deduce that
\begin{eqnarray*}
A_0(r)&:=&\int_1^r A_1(s) \frac{ds}{s}\\
&\geq &
\frac{1}{L}\int_1^r \left(\frac{A_1(s)}{A_2(s)}\right)^2 A_3(s) \frac{ds}{s}\\
&\geq &
\frac{1}{Lc^{2/L}} \int_1^r  A_2(s)^{2/L-2} A_3(s) \frac{ds}{s}\\
&=&
\frac{1}{c^{2/L}(2-L)}  \left( A_2(r)^{2/L-1}- A_2(1)^{2/L-1}\right).
\end{eqnarray*}
Hence
\begin{equation} \label{A0A2}
A_2(r)=o\left(A_0(r)^{L/(2-L)}\right)
\end{equation}
as $r\to\infty$.
We also note that~\eqref{london} yields
\begin{eqnarray*}
\frac{A_1(r)^2}{A_2(r)}
&=& \int_1^r A_1(s)\left(2 - \frac{A_1(s)A_3(s)}{A_2(s)^2}\right) \frac{ds}{s}
+ \frac{A_1(1)^2}{A_2(1)}\\
&\geq &
(2-L)  \int_1^r A_1(s) \frac{ds}{s}\\
&=& (2-L)  A_0(r)
\end{eqnarray*}
so that
\begin{equation} \label{london1}
\frac{A_0(r)A_2(r)}{A_1(r)^2}\leq \frac{1}{2-L}<\frac54.
\end{equation}

We now define $g:[1,\infty)\to [0,\infty)$,
$$g(r):=\int_1^r\sqrt{ A_2(s)} \frac{ds}{s}$$
so that $g'(r)=\sqrt{ A_2(r)} /r\geq 1/\sqrt{r}>0$. Thus $g$
is increasing and hence the inverse function
$h:=g^{-1}:[0,\infty)\to [1,\infty)$ exists.
We will have to use various estimates involving the derivatives
of~$h$. First we note that
$$
h'(t)=\frac{1}{g'(h(t))}=\frac{h(t)}{\sqrt{ A_2(h(t))}}
$$
and hence
\begin{equation}\label{hh'1}
\frac{h(t)}{h'(t)}=\sqrt{ A_2(h(t))}
\geq 1
\end{equation}
for $t\geq 0$ by~\eqref{A321}. We deduce that
\begin{equation}\label{hh'}
\frac{d}{dt}\left(\frac{h(t)}{h'(t)}\right)
=
\frac{ A_2'(h(t))h'(t)}{2\sqrt{ A_2(h(t))}}
=\frac{ A_3(h(t))h'(t)}{2\sqrt{ A_2(h(t))}h(t)}
=\frac{ A_3(h(t))}{2 A_2(h(t))}.
\end{equation}
Similarly we find that
$$
\frac{h''(t)}{h'(t)}=\left(1-\frac{ A_3(h(t))}{2 A_2(h(t))}\right)
\frac{1}{\sqrt{ A_2(h(t))}}
$$
which together with~\eqref{A321}, \eqref{london}
and~\eqref{A2A1} yields that
$$\left|\frac{h''(t)}{h'(t)}\right|\leq
\frac32 \frac{ A_3(h(t))}{A_2(h(t))^{3/2}}
\leq \frac{3L}{2} \frac{\sqrt{A_2(h(t))}}{ A_1(h(t))}
\leq \frac{3L\sqrt{c}}{2} A_1(h(t))^{L/2-1} =o(1)
$$
as $t\to\infty$. It follows that if $0\leq s\leq 1$, then
$$\log \frac{h'(t+s)}{h'(t)}=\int_t^{t+s} \frac{h''(u)}{h'(u)}du
=o(1)$$
and hence
\begin{equation}\label{hts}
h'(t+s)\sim h'(t)\quad\text{for}\quad 0\leq s\leq 1
\end{equation}
as $t\to\infty$.
For later use we also note that~\eqref{london},
\eqref{hh'} and~\eqref{A1A2}
yield that if $r>h(t)$, then
\begin{eqnarray}
\left|\frac{d}{dt}\left(\frac{h(t)}{h'(t)}\log\frac{r}{h(t)}\right)\right|
&=& \left|\frac{ A_3(h(t))}{2 A_2(h(t))}\log\frac{r}{h(t)}-1\right| \nonumber\\
\label{diffh}
&\leq & \frac{L}{2} \frac{ A_2(h(t))}{ A_1(h(t))}\log\frac{r}{h(t)}+1 \\
&\leq &   \frac{Lc^{1/L}}{2} A_2(h(t))^{1-1/L}\log r+1  \nonumber\\
&\leq &  \frac{Lc^{1/L}}{2}  A_2(r)^{1-1/L}\log r+1.  \nonumber\\
 \nonumber
\end{eqnarray}

Finally we shall need the following two lemmas.
\begin{lemma} \label{sumF}
Let $R>0$ and let $F:[0,R]\to\R$ be differentiable.
Then
$$\left| \sum_{k=1}^{[R]}F(k)-\int_{R-[R]}^R F(t)dt\right|
\leq R \sup_{0<t<R} |F'(t)|.$$
\end{lemma}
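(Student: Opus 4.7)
The plan is to exhibit the difference as a sum of local errors, each controlled by the mean value theorem. First I would set $N:=[R]$ and $\alpha:=R-N\in[0,1)$, so that the integration interval $[R-[R],R]=[\alpha,R]$ has length exactly $N$. Then I partition $[\alpha,R]$ into $N$ consecutive unit subintervals $I_k:=[\alpha+k-1,\alpha+k]$ for $k=1,\dots,N$. Since $\alpha\in[0,1)$, each integer $k$ lies in $I_k$, and hence $|k-t|\leq 1$ for every $t\in I_k$. This alignment of the integers with the partition is the whole point of the shift by $\alpha$ in the statement.

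Next, using that each $I_k$ has length $1$, I would rewrite $F(k)=\int_{I_k}F(k)\,dt$ to get the identity
\begin{equation*}
\sum_{k=1}^{N} F(k) \;-\; \int_{\alpha}^{R} F(t)\,dt \;=\; \sum_{k=1}^{N} \int_{I_k} \bigl(F(k)-F(t)\bigr)\,dt.
\end{equation*}
For $t\in I_k$ the fundamental theorem of calculus gives
\begin{equation*}
|F(k)-F(t)| \;=\; \left|\int_t^k F'(u)\,du\right| \;\leq\; |k-t|\sup_{0<u<R}|F'(u)| \;\leq\; \sup_{0<u<R}|F'(u)|.
\end{equation*}
Integrating over $I_k$, summing over $k=1,\dots,N$, and using $N\leq R$ yields the stated bound $R\sup_{0<t<R}|F'(t)|$.

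I do not expect any real obstacle: this is the standard Riemann-sum/integral comparison, and the only subtlety worth pointing out is that the peculiar lower limit $R-[R]$ is precisely what makes the integration interval have length equal to $[R]$, so that the unit subintervals $I_k$ each contain exactly one of the integers $k=1,\dots,[R]$ and the difference splits cleanly term by term.
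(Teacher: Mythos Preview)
Your argument is correct and is exactly the natural one: split $[\alpha,R]$ into $N$ unit intervals, pair the $k$-th interval with the node $k$, and bound each local error by $\sup|F'|$ via the mean value inequality. The paper itself omits the proof, calling it ``straightforward,'' so there is nothing to compare against; your write-up would serve perfectly well as the missing proof. One small stylistic point: since the hypothesis is only that $F$ is differentiable (not that $F'$ is integrable), it is cleaner to invoke the mean value theorem directly to get $|F(k)-F(t)|\le |k-t|\sup_{0<u<R}|F'(u)|$ rather than writing $F(k)-F(t)=\int_t^k F'(u)\,du$; the conclusion is the same, and the MVT also makes it transparent that the intermediate point lies in the open interval $(0,R)$, matching the supremum in the statement.
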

Here $[R]$ denotes the integer part of~$R$.
The proof is straightforward and thus omitted.
The following lemma is due to London~\cite[p.~502]{London1976}.
\begin{lemma} \label{londonla}
Let $\alpha,\beta:(0,\infty)\to (0,\infty)$
be functions such
that $\alpha$ is convex, $\beta$  is twice differentiable,
$\beta'$ is positive and unbounded and
$\beta''$ is positive and continuous.  Suppose that there exist
$L>0$ and $x_0>0$ such that
$$\frac{\beta''(x)}{\beta'(x)}
\leq L \frac{\beta'(x)}{\beta(x)}$$
for $x\geq x_0$. Suppose also that
$\alpha(x)\sim\beta(x)$ as $x\to\infty$.  Then
$\alpha'(x)\sim\beta'(x)$ as $x\to\infty$.
\end{lemma}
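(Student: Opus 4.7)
The plan is to exploit the convexity of $\alpha$ through the sandwich
\[
\frac{\alpha(x)-\alpha(x-h)}{h}\leq \alpha'(x)\leq \frac{\alpha(x+h)-\alpha(x)}{h}
\]
for an appropriately chosen $h=h(x)>0$, and to show that both difference quotients equal $\beta'(x)(1+o(1))$ as $x\to\infty$. Writing $\alpha(x)=\beta(x)(1+\varepsilon(x))$ with $\varepsilon(x)\to 0$ and setting $\varepsilon^{*}(x):=\sup_{y\geq x}|\varepsilon(y)|\to 0$, each difference quotient splits into a main term of the form $(\beta(x\pm h)-\beta(x))/(\pm h)$ plus an $\varepsilon$-error of size at most $2\beta(x)\varepsilon^{*}(x)/h$. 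For the sandwich to pinch at $\beta'(x)$, the main term must be $\beta'(x)(1+o(1))$ and the error $o(\beta'(x))$, which translates to the two requirements $\varepsilon^{*}(x)\beta(x)/(h\beta'(x))\to 0$ and near-constancy of $\beta$ and $\beta'$ on $[x-h,x+h]$.

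The structural hypothesis $\beta''/\beta'\leq L\beta'/\beta$ integrates to
\[
\frac{\beta'(t)}{\beta'(x)}\leq\left(\frac{\beta(t)}{\beta(x)}\right)^{L}\ (t\geq x),\qquad
\frac{\beta'(t)}{\beta'(x)}\geq\left(\frac{\beta(t)}{\beta(x)}\right)^{L}\ (t\leq x).
\]
Substituting the first into $\beta(x+h)-\beta(x)=\int_x^{x+h}\beta'(t)\,dt$ produces the differential inequality $u'(t)\leq (\beta'(x)/\beta(x))\,u(t)^{L}$ for $u(t):=\beta(t)/\beta(x)$, $u(x)=1$. Separating variables and integrating (with the cases $L<1$, $L=1$, $L>1$ handled separately) shows that the implication \emph{``$h\beta'(x)/\beta(x)\to 0\Rightarrow \beta(x+h)/\beta(x)\to 1$''} holds; the analogue $\beta(x-h)/\beta(x)\to 1$ follows at once from the crude bound $\beta(x-h)\geq \beta(x)-\beta'(x)h$, which uses only monotonicity of $\beta'$. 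Feeding these facts back into the two displayed inequalities yields $\beta'(t)/\beta'(x)=1+o(1)$ uniformly on $[x-h,x+h]$, whence
\[
\frac{\beta(x+h)-\beta(x)}{h}=\beta'(x)(1+o(1))=\frac{\beta(x)-\beta(x-h)}{h}.
\]

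The natural balancing choice is $h(x):=\sqrt{\varepsilon^{*}(x)}\,\beta(x)/\beta'(x)$. On the one hand $h\beta'(x)/\beta(x)=\sqrt{\varepsilon^{*}(x)}\to 0$, so the previous step applies, and on the other the $\varepsilon$-error satisfies
\[
\frac{\beta(x)\varepsilon^{*}(x)}{h}=\sqrt{\varepsilon^{*}(x)}\,\beta'(x)=o(\beta'(x)),
\]
with the analogous bound at $x+h$ following from $\beta(x+h)\sim\beta(x)$ and $\varepsilon^{*}(x+h)\leq\varepsilon^{*}(x)$. Inserting these estimates into the convexity sandwich delivers $\alpha'(x)\sim \beta'(x)$, as desired.

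The hard part will be the ODE implication ``$h\beta'(x)/\beta(x)\to 0\Rightarrow \beta(x+h)/\beta(x)\to 1$'', where the pointwise hypothesis on $\beta''/\beta'$ must be upgraded to uniform control of $\beta$ and $\beta'$ on the whole interval $[x,x+h]$; this requires separating the regimes $L<1$, $L=1$, $L>1$ in the separation-of-variables step and is the only computation that is not pure bookkeeping. A minor side issue is checking $x-h>0$ for large $x$, which follows easily from $\sqrt{\varepsilon^{*}(x)}\to 0$ and unboundedness of $\beta'$.
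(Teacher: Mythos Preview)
The paper does not supply a proof of this lemma; it simply quotes it from London~\cite[p.~502]{London1976}. Your argument is the standard convexity/difference-quotient sandwich, and it is correct in substance. The integration of $\beta''/\beta'\le L\beta'/\beta$ to get $\beta'(t)/\beta'(x)\le(\beta(t)/\beta(x))^{L}$, the resulting separable inequality for $u=\beta(\cdot)/\beta(x)$, and the balancing choice $h\asymp\sqrt{\varepsilon^{*}}\,\beta/\beta'$ are exactly the right moves.

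There is one small slip worth tightening. For the backward difference quotient the error term carries $|\varepsilon(x-h)|$, and since $\varepsilon^{*}$ is nonincreasing you only have $|\varepsilon(x-h)|\le\varepsilon^{*}(x-h)$, which can exceed $\varepsilon^{*}(x)$; thus the bound ``$2\beta(x)\varepsilon^{*}(x)/h$'' is not literally valid on that side. The fix is painless: since $\beta$ is convex with $\beta'\to\infty$ one has $\beta(x)/\beta'(x)\le x$ for large $x$, hence $h\le\delta(x)\,x$ and $x-h\ge x/2$ whenever $\delta(x):=h\beta'(x)/\beta(x)\le\tfrac12$. So choose
\[
h(x)=\sqrt{\varepsilon^{*}(x/2)}\,\frac{\beta(x)}{\beta'(x)}
\]
instead; then $\delta(x)=\sqrt{\varepsilon^{*}(x/2)}\to 0$ and the backward error is at most
\[
\frac{(\beta(x)+\beta(x-h))\,\varepsilon^{*}(x-h)}{h}
\le \frac{2\beta(x)\,\varepsilon^{*}(x/2)}{h}
=2\sqrt{\varepsilon^{*}(x/2)}\,\beta'(x)=o(\beta'(x)),
\]
which is what you need. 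With this adjustment your proof goes through as written.
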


\subsection{The maximum modulus of $f$} \label{maxmod}
Let $h$ be as in the previous section. We define
$$ f(z):=\prod_{k=1}^\infty \left(1+\left( \frac{z}{h(k)}
\right)^{\left[\frac{h(k)}{h'(k)}\right]} \right).$$
Note that $[h'(k)/h(k)]\geq 1$ for all $k\in\N$ by~\eqref{hh'1}.

It will be apparent from the computations below that
the infinite product converges absolutely and locally
uniformly and thus defines an entire function which has
$[h(k)/h'(k)]$ equally spaced zeros on the circle of
radius $h(k)$ around~$0$.
In this section we determine the asymptotic behavior of
$\log M(r,f)$ and $a(r,f)$ as $r\to\infty$. In \S\ref{distance}
we will then show that there exist $\gamma,\delta>0$
such that if $|z|$ is sufficiently large,
then the disk of radius $\gamma|z|/\sqrt{\psi(\delta a(|z|,f))}$ contains
a zero of~$f$. Finally we will show in \S\ref{minmod} that $f$ has
only one direct tract $D$ so that $a(r,f)=a(r,f,D)$, thereby
completing the proof of Theorem~\ref{th3}.

Let now $r>0$, define $\rho$ by~\eqref{kappa} and put
$$a_k:=\log  \left(1+\left( \frac{r}{h(k)}
\right)^{\left[\frac{h(k)}{h'(k)}\right]} \right).$$
With
$$
S_1:=\sum_{k=1}^{[g(r)]} a_k,
\quad
S_2:=\sum_{k=[g(r)]+1}^{[g(\rho r)]} a_k
\quad
\text{and}
\quad
S_3:=\sum_{ k=[g(\rho r)]+1}^\infty a_k
$$
we have
$$\log M(r,f)\leq S_1+S_2+S_3.$$
First we note that
$$S_1\leq \sum_{k=1}^{[g(r)]}
\left( \left[\frac{h(k)}{h'(k)}\right]\log\frac{r}{h(k)}+\log 2 \right)
\leq \left( \sum_{k=1}^{[g(r)]}  \frac{h(k)}{h'(k)}\log\frac{r}{h(k)}  \right)
+g(r) \log 2$$
and hence Lemma~\ref{sumF} and~\eqref{diffh} yield that
$$S_1\leq \int_0^{g(r)}
\frac{h(t)}{h'(t)}\log\frac{r}{h(t)} dt
+ g(r)\left( \frac{Lc^{1/L}}{2} A_2(r)^{1-1/L}\log r+1\right) + g(r)\log 2.
$$
Substitution and integration by parts yield
\begin{eqnarray}
\int_0^{g(r)}\frac{h(t)}{h'(t)}\log\frac{r}{h(t)} dt
&=&
\int_1^r sg'(s)^2 \log \frac{r}{s} ds\nonumber \\
&=&
\int_1^r \frac{A_2(s)}{s} \log \frac{r}{s} ds\nonumber \\
&=&
\int_1^r A_1'(s) \log \frac{r}{s} ds \label{integral}\\
&=&\int_1^r A_1(s) \frac{ds}{s}-A_1(1)\log r \nonumber \\
&=& A_0(r) -t_0\log r .  \nonumber
\end{eqnarray}
Moreover,
\begin{equation}\label{gA2}
g(r)=\int_1^r \sqrt{A_2(s)}\frac{ds}{s} \leq  \sqrt{A_2(r)}\log r.
\end{equation}
Combining the above estimates we obtain
$$S_1\leq  A_0(r)+O\left( A_2(r)^{3/2-1/L}(\log r)^2\right)$$
as $r\to\infty$. Now~\eqref{A0A2} yields that
$$A_2(r)^{3/2-1/L}=A_2(r)^{(3L-2)/2L}=o\left(A_0(r)^{(3L-2)/(4-2L)}\right)$$
as $r\to\infty$. Since $L<\frac65$ we have
$$\frac{3L-2}{4-2L}<1.$$
Recalling that $A_2(r)\geq r$ we thus find that 
$$A_2(r)^{3/2-1/L}(\log r)^2=o\left(A_0(r)\right)$$
and hence that
$$S_1\leq (1+o(1)) A_0(r)$$
as $r\to\infty$. 

Next we note that $\rho\leq\frac32$ by~\eqref{A321}. Hence
\begin{eqnarray*}
S_2
&\leq & g(\rho r) \log 2\\
&\leq & \sqrt{A_2(\rho r)}\log (\rho r) \log 2\\
&\leq &  \sqrt{\frac52 A_2(r)}\left(\log r+\log \frac32\right)\log 2\\
&=&O\left(A_0(r)^{L/(4-2L)}\log r\right)\\
&=& o(A_0(r))
\end{eqnarray*}
by~\eqref{kappa1}, \eqref{A0A2} and~\eqref{gA2}.
Finally, using the abbreviation $\tau:=\log\rho$ and noting
that $h/h'$ increases by~\eqref{hh'},  we have
\begin{eqnarray*}
S_3
&\leq &
\sum_{ k=[g(\rho r)]+1}^\infty
\left( \frac{r}{h(k)}
\right)^{\left[\frac{h(k)}{h'(k)}\right]}\\
&\leq &
\sum_{ k=[g(\rho r)]+1}^\infty
\left(\frac{1}{\rho}\right)^{\frac{h(k)}{h'(k)}-1}\\
&= &
\rho\sum_{ k=[g(\rho r)]+1}^\infty
\exp\left(-\tau\frac{h(k)}{h'(k)}\right)\\
&\leq &
\rho \left(\int_{g(\rho r)}^\infty
\exp\left(-\tau\frac{h(t)}{h'(t)}\right)dt+1\right)\\
&=&
\rho  \int_{\rho r}^\infty
g'(s)\exp\left(-\tau sg'(s) \right)ds +\rho \\
&=&
\rho \int_{\rho r}^\infty
\sqrt{A_2(s)}\exp\left(-\tau\sqrt{A_2(s)}\right)\frac{ds}{s}+\rho.
\end{eqnarray*}
Using~\eqref{A321} we thus find that
\begin{eqnarray*}
S_3
&\leq&
2\rho \int_{\rho r}^\infty
\frac{A_3(s)}{2 \sqrt{A_2(s)}}
\exp\left(-\tau\sqrt{A_2(s)}\right)\frac{ds}{s}+\rho \\
&=&
\frac{2\rho}{\tau} \exp\left(-\tau\sqrt{A_2(\rho r)}\right)+\rho\\
&\leq&
\frac{2\rho}{\tau} +\rho.\\
\end{eqnarray*}
Since $1<\rho\leq\frac32$ and
\begin{equation}\label{logx}
\log x\geq (x-1)\log 2\quad \text{for}\quad 1\leq x\leq 2
\end{equation}
we have
$$\tau=\log\rho\geq (\rho-1)\log 2=\frac{A_1(r)}{2A_2(r)}\log 2
\geq \frac{\log 2}{2c^{1/L}}A_2(r)^{1/L-1}$$
by~\eqref{A1A2} and hence
$$S_3\leq
\frac{3}{\tau} +\frac32\leq
\frac{6c^{1/L}}{\log 2} A_2(r)^{1-1/L} +\frac32
=O\left(A_0(r)^{(L-1)/(2-L)}\right)
= o(A_0(r)) $$
by~\eqref{A0A2}.
Combining the estimates for $S_1$, $S_2$ and $S_3$ we conclude that
$$\log M(r,f)\leq (1+o(1) A_0(r)$$
as $r\to\infty$.

On the other hand, denoting as usual (see~\cite{GO,Hayman1964,Nevanlinna1953})
by $N(r,1/f)$ the counting function of the zeros of $f$, we have
$$\log M(r,f)\geq N\left(r,\frac{1}{f}\right)
=\sum_{|c_j|<r}\log\frac{r}{|c_j|}$$
where $c_1,c_2,\ldots$ are the zeros of~$f$.
We obtain
$$N\left(r,\frac{1}{f}\right)
=
\sum_{k=1}^{[g(r)]}
\left[ \frac{h(k)}{h'(k)} \right] \log\frac{r}{h(k)}$$
and we see as in the estimate for $S_1$ that
$$N\left(r,\frac{1}{f}\right) \geq \int_0^{g(r)}
\frac{h(t)}{h'(t)}\log\frac{r}{h(t)} dt-o(A_0(r))
=(1-o(1))A_0(r).$$
Altogether we thus have
\begin{equation} \label{MA0}
\log M(r,f)\sim A_0(r)
\end{equation}
as $r\to\infty$.
It follows from~\eqref{london1} and Lemma~\ref{londonla}, applied to $\alpha(x)=
\log M(e^x,f)$ and $\beta(x)=A_0(e^x)$, that
\begin{equation} \label{asympa}
a(r,f)\sim A_1(r)
\end{equation}
as $r\to\infty$.

\subsection{The distance to the closest zero}\label{distance}
For $z\in\C$ we denote by $\delta(z)$ the distance of $z$ to
the closest zero of $f$ and we put $d(r):=\max_{|z|=r}\delta(z)$ for $r>0$.
For $r>h(1)$ we put  $n:=[g(r)]$
so that $n\geq 1$ and $h(n)\leq r\leq h(n+1)$.
As $f$ has $[h(n)/h'(n)]$ equally spaced zeros on the circle
with radius $h(n)$ it follows that
$$d(r)\leq r-h(n)+\frac{2\pi h(n)}{\left[\frac{h(n)}{h'(n)}\right]}
\leq  h(n+1)-h(n)+7 h'(n)$$
for large~$r$.
By~\eqref{hts} we have $h'(n)\sim h'(g(r))$ and
$$h(n+1)-h(n)=\int_n^{n+1} h'(u) du \sim h'(g(r))$$
as $r\to\infty$.
Together with~\eqref{asympa} we thus find that
$$d(r)
\leq 9h'(g(r))=\frac{9}{g'(r)}=\frac{9r}{\sqrt{A_2(r)}}
=\frac{9r}{\sqrt{\psi(A_1(r))}} \leq
\frac{9r}{\sqrt{\psi\left(\frac12 a(r,f)\right)}}$$
for large~$r$.
As mentioned at the beginning of the proof, the
method thus also yields a function $f$ with $d(r)\leq r/\sqrt{\psi\left(
a(r,f)\right)}$ for large~$r$.
\subsection{The minimum modulus of $f$} \label{minmod}
For $|z|=r_n:= h\left(n+\frac12\right)$ where $n\in\N$ we have
\begin{equation}\label{lowerb}
\log|f(z)|\geq
\sum_{k=1}^{n} \log (b_k-1)
- \sum_{k=n+1}^\infty \log  \left(1+b_k \right)
\end{equation}
where
$$b_k:=\left( \frac{r_n}{h(k)}
\right)^{\left[\frac{h(k)}{h'(k)}\right]}.$$
Noting that $[g(r_n)]=n$ we see that
the estimates for $S_2$ and $S_3$ in~\S\ref{maxmod} show that
\begin{equation}\label{lowerb1}
\sum_{k=n+1}^\infty \log  \left(1+b_k \right)=o(A_0(r_n))
\end{equation}
as $n\to\infty$. To estimate the first sum on the right hand side
of~\eqref{lowerb} we note that
if $r_n \geq 2 h(k)$, then $b_k\geq 2$.
On the other hand, using~\eqref{logx} we
see that if $r_n<  2 h(k)$, then
\begin{eqnarray*}
\log b_k
&=&
\left[\frac{h(k)}{h'(k)}\right] \log  \left( \frac{r_n}{h(k} \right)\\
&=&
 \left[\frac{h(k)}{h'(k)}\right]
 \log  \left(1+\frac{h\left(n+\frac12\right) -h(k)}{h(k)} \right)\\
&\geq & \log 2  \left[ \frac{h(k)}{h'(k)} \right]
\frac{h\left(n+\frac12\right) -h(k)}{h(k)}\\
&\geq & \frac12 \frac{h\left(n+\frac12\right) -h(k)}{h'(k)}\\
&= & \frac12 \frac{1}{h'(k)} \int_{k}^{k+\frac12} h'(t)dt
\end{eqnarray*}
for large~$n$.
Using~\eqref{hts} we see  that
$\log b_k\geq \frac15$
for these values of~$k$, provided $n$ is sufficiently large.
Since $2\geq \exp\frac15$ we thus have
$b_k\geq \exp\frac15$ for all $k\leq n$ if $n$ is large.
With $B:=\frac15-\log\left( \exp\frac15 -1\right)$ we have
$$\log(b-1)\geq \log(b)-B \quad \text{for} \quad b\geq \exp\frac15$$
and thus
$$
\sum_{k=1}^{n} \log (b_k-1) \geq
\sum_{k=1}^{n} \log b_k - nB
=\sum_{k=1}^{n}
\left[\frac{h(k)}{h'(k)}\right] \log \left( \frac{r_n}{h(k)}
\right) - nB
$$
for large~$n$.
Using Lemma~\ref{sumF} and~\eqref{integral}
we conclude as in~\S\ref{maxmod} that
$$\sum_{k=1}^{n} \log (b_k-1) \geq (1-o(1))A_0(r_n).$$
Combining this with~\eqref{lowerb1} this yields
$$\min_{|z|=r_n}\log |f(z)| \geq (1-o(1))A_0(r_n).$$
In particular, $\min_{|z|=r_n}\log |f(z)|\to\infty$ as $n\to\infty$.
It follows  that $f$ has exactly one direct tract.
This completes the proof of Theorem~\ref{th3}.


\end{document}